\newtheorem{theorem}{Theorem}[section]
\newtheorem{corollary}[theorem]{Corollary}
\newtheorem{proposition}[theorem]{Proposition}
\newtheorem{lemma}[theorem]{Lemma}
\theoremstyle{definition}
\newtheorem{definition}[theorem]{Definition}
\newtheorem{example}[theorem]{Example}
\newtheorem{question}[theorem]{Question}
\newcommand*{\lowerparen}[2]{%
  \raisebox{-#1}{\(\left(\raisebox{#1}{\(\displaystyle #2 \)}\right)\)}}
\newcommand*{\letteroverline}[1]{\mspace{4mu}\overline{\mspace{-4mu}#1}}
\newcommand*{\sigmam}{\sigma_{\!\scaleobj{0.8}{-}}}
\tikzset{every node/.style={draw, circle, inner sep=0.3mm, minimum size=1em}}
\title{\vspace{-2em} An inverse problem for the collapsing sum}
\author{Travis Dillon}
\begin{document}

\maketitle

\begin{abstract}
Gaussian filters have applications in a variety of areas in computer science, from computer vision to speech recognition. The collapsing sum is a matrix operator that was recently introduced to study Gaussian filters combinatorially. In this paper, we view the collapsing sum from a discrete tomographical perspective and examine the recoverability of its preimages as a matrix completion problem. Using bipartite graphs, we derive a necessary and sufficient condition for a partial matrix to be extended to a preimage of a given matrix.
\end{abstract}

\section{Introduction}

Gaussian filters play a central role in image and signal processing, with applications in human and computer vision, edge detection, and speech processing \cite{gaussian-convolution-alg-survey, recursive-gaussian-blur}. The \textit{collapsing sum} is a matrix operator introduced in \cite{mjum-gauss-blur} to study Gaussian filters combinatorially.

Let $A$ be an $m\times n$ matrix. We denote the entries of a matrix by the corresponding lower-case letter, so the $(i,j)$th entry of $A$ is $a_{i,j}$. The collapsing sum of $A$ is the $(m-1) \times (n-1)$ matrix $\sigma(A)$ with entries 
\[ \sigma(A)_{i,j} = a_{i,j} + a_{i+1,j} + a_{i,j+1} + a_{i+1,j+1}. \]
When scaled by a factor of $1/4$, the collapsing sum returns an average of nearby entries. Applying the operator multiple times averages a matrix over larger regions in the same manner as a Gaussian filter. To slightly simplify the mechanics of later calculations, we work with the \textit{balanced collapsing sum} $\sigmam$, whose entries are given by
\[ \sigmam(A)_{i,j} = a_{i,j} - a_{i+1,j} - a_{i,j+1} + a_{i+1,j+1}. \]
We also state the analogues of our results for the collapsing sum, which follow from straightforward modifications to our proofs.

The overarching philosophy of discrete tomography is to reconstruct discrete structures from a small number of projections (see \cite{discrete-tomography,discrete-tomography2} for an overview of the field). Its origins can be traced back to the 1950s, when Gale \cite{gale-network-flows} and Ryser \cite{ryser-combinatorial-properties} provided necessary and sufficient conditions for a binary matrix to have specified row and columns sums. However, each pair of projections usually corresponds to multiple matrices, so some \textit{a priori} information about the structure of the matrix, such as convexity or periodicity, is usually assumed. Much of the classical work in discrete tomography focused on linear projections. Nivat \cite{plane-tilings} introduced \textit{rectangular scans}, a different kind of matrix projection, and studied their tomographical properties. Further research on rectangular scans has been conducted by Frosini, Nivat, and Rinaldi \cite{binary-matrix-reconstruction,two-rectangular-windows,01-equivalence}, among others.

The main problem in this paper continues in this vein. The collapsing sum can be viewed as a $2\times 2$ rectangular scan, and our goal, broadly defined, is to reconstruct a matrix from its (balanced) collapsing sum. In contrast to the work in \cite{binary-matrix-reconstruction,two-rectangular-windows,plane-tilings,01-equivalence}, our tools will be algebraic as well as combinatorial. Stolk and Batenburg \cite{algebraic-discrete-tomography} have described an algebraic framework for discrete tomography, and the collapsing sum is particularly amenable to linear algebraic methods \cite{mjum-gauss-blur}.

In general, we fix an arbitrary additive abelian group $G$ and denote by $G^{m\times n}$ the additive group of $m\times n$ matrices with entries in $G$. Although technically speaking multiplication is not defined for elements of $G$, we define $1 \cdot g = g \cdot 1 = g$ for every $g\in G$ and set $e_n$ as the $n\times 1$ vector in which every entry is $1$. In \cref{sec:kernel}, we prove the following isomorphism using the balanced collapsing sum.

\begin{theorem}\label{thm:sigmam-iso}
Let $\mathcal{K}_{m,n} = \{ ue_n^T + e_mv^T : u \in G^{m\times 1} \text{ and } v \in G^{n\times 1} \}$. If $m,n \geq 2,$ then $G^{(m-1)\times(n-1)} \cong G^{m\times n}/\mathcal{K}_{m,n}$.
\end{theorem}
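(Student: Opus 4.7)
The plan is to exhibit the balanced collapsing sum $\sigmam\colon G^{m\times n} \to G^{(m-1)\times(n-1)}$ as a surjective group homomorphism with kernel exactly $\mathcal{K}_{m,n}$, and then invoke the first isomorphism theorem for abelian groups. Because each entry of $\sigmam(A)$ is a signed sum of entries of $A$, the operator is additive in $A$, so the homomorphism property is immediate.

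For surjectivity, given $B \in G^{(m-1)\times(n-1)}$, I would build an explicit preimage. A clean choice is
\[ a_{i,j} = \sum_{k=1}^{i-1} \sum_{l=1}^{j-1} b_{k,l}, \]
so that the first row and first column of $A$ vanish and a four-term inclusion--exclusion shows $\sigmam(A)_{i,j} = b_{i,j}$. A recursive construction, solving $a_{i+1,j+1} = b_{i,j} - a_{i,j} + a_{i+1,j} + a_{i,j+1}$ entry by entry after zeroing out the first row and column, would work equally well.

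The heart of the argument is pinning down the kernel. The inclusion $\mathcal{K}_{m,n} \subseteq \ker\sigmam$ is a direct substitution: if $a_{i,j} = u_i + v_j$, the four contributions to $\sigmam(A)_{i,j}$ cancel in pairs. For the opposite inclusion, I would use a double telescoping identity
\[ \sum_{k=1}^{i-1}\sum_{l=1}^{j-1} \sigmam(A)_{k,l} = a_{i,j} - a_{i,1} - a_{1,j} + a_{1,1}, \]
proved by collapsing the inner sum over $k$ first and then the outer sum over $l$. If $\sigmam(A) = 0$, this forces $a_{i,j} = (a_{i,1} - a_{1,1}) + a_{1,j}$ for every $(i,j)$; setting $u_i = a_{i,1} - a_{1,1}$ and $v_j = a_{1,j}$ then exhibits $A = ue_n^T + e_mv^T \in \mathcal{K}_{m,n}$.

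The main obstacle I expect is the reverse kernel inclusion; the trick is recognizing that the values of $\sigmam(A)$ on a prefix rectangle telescope to a single corner expression involving only boundary entries of $A$. Once both inclusions are in hand, the first isomorphism theorem yields $G^{m\times n}/\mathcal{K}_{m,n} \cong G^{(m-1)\times(n-1)}$. The hypothesis $m,n \geq 2$ enters only to ensure the target of $\sigmam$ is well-defined and that the entries $a_{i,1}$, $a_{1,j}$, $a_{1,1}$ used in the kernel argument all exist.
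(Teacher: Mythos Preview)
Your proposal is correct and follows essentially the same route as the paper: exhibit $\sigmam$ as a surjective homomorphism via the explicit preimage $a_{i,j}=\sum_{k<i}\sum_{l<j}b_{k,l}$ (the paper's $B^+$), identify the kernel with $\mathcal{K}_{m,n}$ using the quantity $a_{i,j}-a_{i,1}-a_{1,j}+a_{1,1}$, and apply the first isomorphism theorem. The only minor difference is in the reverse kernel inclusion: you use the prefix-rectangle telescoping identity directly, whereas the paper argues by contrapositive, picking a nonzero entry of $\letteroverline{A}$ with minimal $p+q$ and reading off a nonzero entry of $\sigmam(\letteroverline{A})$.
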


A similar result holds for the collapsing sum.\footnote{If $f_n$ is the $n\times 1$ vector whose $i$th coordinate is $(-1)^i$, then
\[G^{(m-1)\times(n-1)} \cong G^{m\times n} / \{uf_n^T + f_mv^T : u \in G^{m\times 1} \text{ and } v \in G^{n \times 1} \}.\]} The set $\mathcal{K}_{m\times n}$ is the kernel of the map $\sigmam\colon G^{m\times n} \to G^{(m-1)\times(n-1)}$. When $G$ is an ordered group ($\mathbb{Z}$ or $\mathbb{R}$, say), this set can be described in another manner. A matrix $A \in G^{m\times n}$ is a \textit{Monge matrix} if $a_{i,k} + a_{j,l} \leq a_{i,l} + a_{j,k}$ for every $1 \leq i < j \leq m$ and $1 \leq k < l \leq n$; it is an \textit{anti-Monge matrix} if the inequality is reversed. It turns out that $A$ is a Monge matrix if and only if $a_{i,k} + a_{i+1,k+1} \leq a_{i+1,k} + a_{i,k+1}$ for every $1 \leq i < m$ and $1 \leq k < n$. In other words, $A$ is a Monge matrix if and only if $\sigmam(A) \leq 0$ and an anti-Monge matrix if and only if $\sigmam(A) \geq 0$ (the inequalities are entrywise). Thus, the kernel of $\sigmam$ consists of exactly those matrices that are simultaneously Monge and anti-Monge (such matrices are sometimes referred to as \textit{sum matrices}). Monge and anti-Monge matrices are of particular interest in combinatorial optimization; see \cite{monge-matrices-optimization} for a survey.

In terms of discrete tomography, \cref{thm:sigmam-iso} means that every collapsing sum reconstruction problem is solvable. In fact, the equation $\sigmam(X) = B$ has the same number of solution matrices for every choice of matrix $B \in G^{(m-1) \times (n-1)}$. In order to make further progress, then, we have to assume some \textit{a priori} information about $X$. One way to do this is with a partial matrix.


Let $\ast$ be a symbol not in $G$. (We think of $\ast$ as a blank entry.) A \textit{partial matrix} is a matrix with entries in $G \cup \{\ast\}$, and the set of $m\times n$ partial matrices is denoted by $G_*^{m\times n}$. A \textit{completion} of a partial matrix $A \in G_*^{m\times n}$ is a matrix $C \in G^{m\times n}$ so that $c_{i,j} = a_{i,j}$ whenever $a_{i,j} \neq \ast$. An $m\times n$ partial matrix $A$ is \textit{consistent} with an $(m-1)\times (n-1)$ matrix $B$ if $A$ has a completion whose balanced collapsing sum is $B$. If $A$ and $B$ are two partial matrices with the same dimensions, we let $(A+B)_{i,j} = a_{i,j} + b_{i,j}$ if $a_{i,j} \neq *$ and $b_{i,j} \neq *$, and $(A+B)_{i,j} = *$ otherwise. Our tomographical problem can be formally stated as follows.

\begin{question}\label{question:partial-completion}
Given a partial matrix $A \in G_*^{m\times n}$ and a matrix $B \in G^{(m-1)\times(n-1)}$, under what conditions is $A$ consistent with $B$? If $A$ is consistent with $B$, when does it have a unique completion $C$ so that $\sigmam(C) = B$?
\end{question}

If every entry of $A$ is $\ast$, then \cref{question:partial-completion} reduces to the case with no \textit{a priori} information. Similar questions on matrix completion have been asked in different contexts. Each problem seeks conditions on a partial matrix that guarantee a completion with a specific property. Some look for completions with a prescribed spectrum or characteristic polynomial \cite{matrix-completion-eigenvalues}, while others look for completions that are positive (semi)definite or a have a specified rank \cite{matrix-completion-survey}.

To address \cref{question:partial-completion}, we introduce some terminology. For each partial matrix $A \in G_*^{m\times n}$, we define the bipartite graph $H_A$ on the bipartition $\big[\{x_i : i \in [m] \},\ \{y_i : i \in [n]\}\big]$ with the edge $(x_i,y_j)$ if $a_{i,j} \neq \ast$ (see Figure \ref{fig:bipartite-graph-example}). A sequence $c = (i_1,j_1,i_2,j_2, \dots, j_k, i_1)$ is a \textit{cycle in $A$} if $w(c) = (x_{i_1},y_{j_1}, \dots, y_{j_k}, x_{i_1})$ is a cycle in $H_{A}$, in other words, if $w(c)$ is a closed walk in $H_A$ that does not repeat edges. A cycle $c$ is \textit{minimal} if the subgraph of $H_A$ induced by $\{x_{i_1},y_{j_1}, \dots, y_{j_k}\}$ does not contain a cycle on fewer vertices.  A cycle $(i_1,j_1,i_2,j_2, \dots, j_k, i_{k+1} = i_1)$ in $A$ is \textit{balanced} if $\sum_{r=1}^k (a_{i_r,j_r} - a_{i_{r+1},j_r}) = 0$, and $A$ is \textit{cycle-balanced} if every cycle in $A$ is balanced. (If $A$ contains no cycles, then it is trivially cycle-balanced.) It turns out (see \cref{thm:minimal-cycles-balanced}) that we need only check the minimal cycles to verify that $A$ is cycle-balanced. We denote the edge set of $H$ by $E(H)$ and the number of connected components of $H$ by $c(H)$. Our second theorem answers \cref{question:partial-completion} and counts the exact number of completions of $A$ that collapse to $B$.

\begin{figure}\label{fig:bipartite-graph-example}
\centering
\begin{minipage}{0.35\textwidth}
\[ \begin{pmatrix}
    3 & 0 & * & * \\
    8 & * & 2 & 0 \\
    * & 1 & * & 7
\end{pmatrix} \]
\end{minipage}
\begin{minipage}{0.35\textwidth}
\begin{center}\begin{tikzpicture}
\foreach \label/\x in {3/1, 2/0, 1/-1}
    \node (a\label) at (\x,1) {$x_\label$};
\foreach \label/\x in {4/1.5, 3/0.5, 2/-0.5, 1/-1.5}
    \node (b\label) at (\x,0) {$y_\label$};
\foreach \from/\to in {a1/b1, a1/b2, a2/b1, a2/b3, a2/b4, a3/b2, a3/b4}
    \draw (\from) to (\to);
\end{tikzpicture}\end{center}
\end{minipage}
\caption{A partial matrix in $\mathbb{Z}_*^{3\times 4}$ and its corresponding bipartite graph.}
\end{figure}
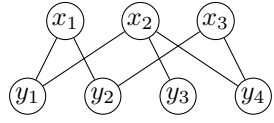

\begin{theorem}\label{thm:bipart-recovery}
Let $G$ be an additive abelian group, $A$ be a partial matrix in $G_\ast^{m\times n}$, and $\widetilde B$ be any matrix in the preimage of $B \in G^{(m-1)\times (n-1)}$ under $\sigmam$. The partial matrix $A$ is consistent with $B$ if and only if $A - \widetilde{B}$ is cycle-balanced; in this case, $A$ has a unique completion $C$ satisfying $\sigmam(C) = B$ if and only if $H_A$ is connected. Moreover, if $\lvert G \rvert = k$ and $A$ is consistent with $B$, then there are exactly $k^{c(H_{A})-1}$ completions of $A$ that are preimages of $B$ under $\sigmam$.
\end{theorem}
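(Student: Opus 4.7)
My strategy is to reduce the theorem to a bipartite linear system over $G$. The starting point is \cref{thm:sigmam-iso}: its kernel description says that $C$ satisfies $\sigmam(C) = B$ if and only if $C = \widetilde{B} + u e_n^T + e_m v^T$ for some $u \in G^{m \times 1}$ and $v \in G^{n \times 1}$. I will set $D := A - \widetilde{B}$ (with $d_{i,j}$ defined exactly when $a_{i,j} \neq *$), after which the condition that $C$ extend $A$ becomes the system
\[ u_i + v_j = d_{i,j} \qquad \text{for every edge } (x_i, y_j) \in E(H_A). \]
The rest of the proof will analyze when this system is solvable and count its solutions, then quotient by the gauge transformation $(u,v) \mapsto (u + t e_m,\, v - t e_n)$, which is exactly the kernel of the map $(u,v) \mapsto u e_n^T + e_m v^T$.

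For \emph{necessity} of the cycle-balanced condition, I would take any solution $(u,v)$ of the system and any cycle $c = (i_1, j_1, \ldots, j_k, i_1)$ in $A$, then substitute $d_{i,j} = u_i + v_j$ into $\sum_{r=1}^{k}(d_{i_r,j_r} - d_{i_{r+1},j_r})$: each $v_{j_r}$ cancels and what remains, $\sum_r (u_{i_r} - u_{i_{r+1}})$, telescopes to $0$.

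For \emph{sufficiency and counting}, I would assume $D$ is cycle-balanced, fix a spanning forest of $H_A$ with a chosen root in each component, freely pick values at each root and each isolated vertex, and propagate along the forest using $u_i + v_j = d_{i,j}$; this produces a unique $(u,v)$ satisfying every tree-edge equation. For a non-tree edge $(x_{i^*}, y_{j^*})$, the tree path from $x_{i^*}$ to $y_{j^*}$ together with this edge forms a cycle in $A$, and after substituting the tree equations into the cycle-balanced identity for this cycle the interior $u$'s and $v$'s telescope, leaving exactly $u_{i^*} + v_{j^*} = d_{i^*,j^*}$. Hence solutions exist, and each connected component of $H_A$ contributes one independent $G$-valued parameter, giving $|G|^{c(H_A)}$ solutions. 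The gauge action is free (using $m, n \geq 1$), so $(u,v) \mapsto u e_n^T + e_m v^T$ is $|G|$-to-$1$, producing $|G|^{c(H_A)-1}$ distinct completions of $A$ that collapse to $B$; uniqueness is then equivalent to $c(H_A) = 1$.

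The step that will require the most care is the non-tree edge verification in the sufficiency argument: one must track which edges of the cycle are of the form $(x_{i_r}, y_{j_r})$ versus $(x_{i_{r+1}}, y_{j_r})$ and confirm that after substituting the $2k-1$ tree equations into the cycle-balanced identity the resulting telescoping collapses the sum to exactly the single missing equation on the non-tree edge. Everything else is essentially bookkeeping once the problem has been recast as the bipartite linear system above.
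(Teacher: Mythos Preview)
Your proposal is correct and follows essentially the same approach as the paper: both reduce via the kernel description $C=\widetilde B+ue_n^T+e_mv^T$ to the bipartite potential system $u_i+v_j=d_{i,j}$ on $E(H_A)$, verify necessity by telescoping along cycles, and obtain the count $k^{c(H_A)-1}$ by dividing the $k^{c(H_A)}$ solutions $(u,v)$ by the $k$-element gauge orbit (the paper's \cref{thm:equal-kernel-matrices}). The only organizational difference is that the paper builds the completion $C$ directly as a path-sum in an augmented connected graph $H_{A'}$ and brings in $(u,v)$ only for uniqueness and counting, whereas you solve for $(u,v)$ via a spanning forest from the outset; these are equivalent constructions.
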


A similar statement is true for the collapsing sum.\footnote{Let $G$ be an additive abelian group, $A$ be a partial matrix in $G_\ast^{m\times n}$, and $\widetilde B$ be any matrix in the preimage of $B \in G^{(m-1)\times (n-1)}$ under $\sigma$. There is a completion of $A$ whose collapsing sum is $B$ if and only if $A' := A - \widetilde{B}$ satisfies $\sum_{r=1}^k (a'_{i_r,j_r} + a'_{j_r,i_{r+1}})=0$ for every cycle $(i_1,j_1,i_2,j_2, \dots, j_k, i_{k+1} = i_1)$ in $A$; in this case, $A$ has a unique completion $C$ satisfying $\sigma(C) = B$ if and only if $H_A$ is connected. Moreover, if $\lvert G \rvert = k$ and $A$ is consistent with $B$, then there are exactly $k^{c(H_{A})-1}$ completions of $A$ that are preimages of $B$ under $\sigma$.}
The proof of \cref{thm:bipart-recovery} that appears in \cref{sec:bipartite} is valid even when $A$ has a countably infinite number of rows or columns.

\section{Kernel of the balanced collapsing sum}\label{sec:kernel}

Given two matrices $A,B \in G^{m\times n}$, we write $A \sim B$ if there are vectors $u \in G^{m\times 1}$ and $v \in G^{n\times 1}$ so that $A = B + ue_n^T + e_mv^T$. In other words, $A \sim B$ if and only if there exist $u_1, \dots, u_m, v_1,\dots, v_n \in G$ so that $a_{i,j} = b_{i,j} + u_i + v_j$ for every $(i,j) \in [m]\times[n]$. The relation $\sim$ is an equivalence and a congruence: If $A \sim B$, then $A + C \sim B+ C$ for any matrix $C \in G^{m\times n}$. The equivalence class of $A$ under $\sim$ is denoted $[A]$, and the equivalence class of $0_{m\times n}$ is denoted $\mathcal{K}_{m\times n}$.

\begin{lemma}\label{thm:sigmam-constant-eq-classses}
The balanced collapsing sum $\sigmam$ is constant on equivalence classes. That is, if $A \sim B$, then $\sigmam(A) = \sigmam(B)$.
\end{lemma}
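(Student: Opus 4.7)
The plan is to prove this by a direct substitution, structured through the additivity of $\sigmam$ so that the computation reduces to a couple of lines.

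First I would observe that $\sigmam \colon G^{m\times n}\to G^{(m-1)\times(n-1)}$ is a group homomorphism, since each entry $\sigmam(A)_{i,j}$ is a fixed $\mathbb{Z}$-linear combination of entries of $A$ and the group operation on matrices is entrywise. Hence showing $\sigmam(A) = \sigmam(B)$ whenever $A \sim B$ is equivalent to showing $\sigmam(M) = 0$ for every matrix $M$ of the form $ue_n^T + e_mv^T$, and by additivity it suffices to handle the two summands $ue_n^T$ and $e_mv^T$ separately.

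Next I would compute each piece directly. If $M = ue_n^T$, then $m_{i,j} = u_i$ for every $j$, so
\[ \sigmam(M)_{i,j} = u_i - u_{i+1} - u_i + u_{i+1} = 0. \]
If $M = e_mv^T$, then $m_{i,j} = v_j$ for every $i$, so
\[ \sigmam(M)_{i,j} = v_j - v_j - v_{j+1} + v_{j+1} = 0. \]
Combining these with additivity gives $\sigmam(ue_n^T + e_mv^T) = 0$, and therefore $\sigmam(A) = \sigmam(B + ue_n^T + e_mv^T) = \sigmam(B)$ whenever $A \sim B$.

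There is no real obstacle here; the lemma is essentially a bookkeeping observation that the four-term alternating combination defining $\sigmam$ is built so that any summand depending only on the row index (like $u_i$) or only on the column index (like $v_j$) is annihilated. The only thing to be careful about is that we are working in an arbitrary abelian group $G$, so I would avoid invoking scalar multiplication and simply use the additive cancellations above, which makes the argument valid verbatim in $G^{m\times n}$.
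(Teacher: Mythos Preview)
Your proof is correct and follows essentially the same approach as the paper: reduce via additivity of $\sigmam$ to showing that $\sigmam$ annihilates each rank-one summand $ue_n^T$ and $e_mv^T$, then verify this by direct entrywise computation. The paper's version is slightly terser (it computes one case and declares the other similar), but the argument is the same.
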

\begin{proof}
It suffices to prove that $\sigmam(e_mu^T) = \sigmam(ve_n^T) = 0$ for all $u \in G^{n\times 1}$ and $v \in G^{m\times 1}$. A generic element of $\sigmam(e_mu^T)$ is 
\[
\sigmam(e_mu^T)_{p,q}
= \sigmam\begin{pmatrix} u_q & u_{q+1} \\ u_q & u_{q+1} \end{pmatrix}
= 0.
\]
The calculation for $ve_n^T$ is similar.
\end{proof}

It will be useful to have a canonical representative for each equivalence class.

\begin{definition}\label{def:bar-matrix}
Let $A \in G^{m\times n}$. The $m\times n$ matrix $\letteroverline{A}$ is defined by
\[ \overline{a}_{i,j} = a_{i,j} - a_{i,1} - a_{1,j} + a_{1,1}. \]
\end{definition}

Setting $u_i = -a_{i,1}$ and $v_j = -a_{j,1} + a_{1,1}$, we have $\overline{a}_{i,j} = a_{i,j} + u_i + v_j$, which shows that $A \sim \letteroverline{A}$. A straightforward calculation shows that $\letteroverline{A} = \letteroverline{B}$ if $A \sim B$, so $\letteroverline{A}$ is indeed a well-defined representative of $[A]$, the unique element of $[A]$ whose first row and column contain only zeros.

\begin{proposition}\label{thm:kernel-sigmam}
Let $m,n \geq 2$ and $\sigmam\colon G^{m\times n} \rightarrow G^{(m-1) \times (n-1)}$. The kernel of $\sigmam$ is exactly $\mathcal{K}_{m\times n}$.
\end{proposition}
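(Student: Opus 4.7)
The plan is to prove both inclusions. The inclusion $\mathcal{K}_{m\times n} \subseteq \ker\sigmam$ is essentially free: by definition $\mathcal{K}_{m\times n} = [0_{m\times n}]$, so by \cref{thm:sigmam-constant-eq-classses} every element of $\mathcal{K}_{m\times n}$ maps to $\sigmam(0_{m\times n}) = 0$.

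For the reverse inclusion, I would exploit the canonical representative $\letteroverline{A}$. Suppose $A \in \ker\sigmam$. Since $A \sim \letteroverline{A}$, \cref{thm:sigmam-constant-eq-classses} gives $\sigmam(\letteroverline{A}) = 0$ as well. It suffices to show $\letteroverline{A} = 0_{m\times n}$, for then $A \in [0_{m\times n}] = \mathcal{K}_{m\times n}$.

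The key step is to prove that the first row and column of $\letteroverline{A}$ together with the condition $\sigmam(\letteroverline{A}) = 0$ determine every entry. By the definition of $\letteroverline{A}$ we already have $\overline{a}_{i,1} = \overline{a}_{1,j} = 0$ for all $i,j$. I would then induct on $i+j$ (with base case $i+j \le 3$ covered by the boundary zeros). For $i,j \geq 2$, the relation
\[
\sigmam(\letteroverline{A})_{i-1,j-1} = \overline{a}_{i-1,j-1} - \overline{a}_{i,j-1} - \overline{a}_{i-1,j} + \overline{a}_{i,j} = 0
\]
lets me solve $\overline{a}_{i,j} = -\overline{a}_{i-1,j-1} + \overline{a}_{i,j-1} + \overline{a}_{i-1,j}$, and the three entries on the right are all zero by the inductive hypothesis. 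Hence $\overline{a}_{i,j} = 0$ for every $(i,j)$, which finishes the proof.

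I do not expect any real obstacle here: the argument is essentially a two-dimensional telescoping, and the choice of $\letteroverline{A}$ as the representative with zero first row and column is engineered precisely so that the recurrence $\sigmam(\letteroverline{A})_{i-1,j-1} = 0$ propagates the zero boundary data into the full matrix. The only thing to be a little careful about is that the induction uses both $\overline{a}_{i-1,j}$ and $\overline{a}_{i,j-1}$, so the induction should be on $i+j$ rather than on $i$ or $j$ alone.
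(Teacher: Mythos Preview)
Your proposal is correct and follows essentially the same approach as the paper: reduce to the canonical representative $\letteroverline{A}$, use that its first row and column vanish, and then exploit the relation $\sigmam(\letteroverline{A})_{i-1,j-1}=0$ to propagate zeros. The only cosmetic difference is that the paper phrases the key step as a contrapositive (pick a nonzero entry $\overline{a}_{p,q}$ with $p+q$ minimal and observe $\sigmam(\letteroverline{A})_{p-1,q-1}\neq 0$), whereas you do the equivalent direct induction on $i+j$.
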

\begin{proof}
Since $\letteroverline{K} = 0$ for every $K \in \mathcal K_{m,n}$, we need to show that $\sigmam(A) = 0$ if and only if $\letteroverline{A} = 0$. \cref{thm:sigmam-constant-eq-classses} implies that $\sigmam(A) = \sigmam(\letteroverline{A})$, and clearly $\letteroverline{A} = 0$ implies $\sigmam(\letteroverline{A}) = 0$. If $\letteroverline{A} \not= 0$, let $\overline{a}_{p,q}$ be a nonzero entry of $\letteroverline{A}$ such that $p+q$ is minimal. Then $p,q > 1$ and $\overline{a}_{p-1,q-1} = \overline{a}_{p-1,q} = \overline{a}_{p,q-1} = 0$, so $\sigmam(\letteroverline{A})_{p-1,q-1} = \overline{a}_{p,q} \not= 0$; therefore $\sigmam(A) \neq 0$.
\end{proof}

We note that Frosini and Nivat \cite[Theorem 7]{binary-matrix-reconstruction} prove \cref{thm:kernel-sigmam} in the special case of binary matrices. We next give a construction to show that $\sigmam$ is surjective.

\begin{definition}
Let $A \in G^{m \times n}$. The $(m+1) \times (n+1)$ matrix $A^+$ is defined by
\[ a^+_{p,q} = \sum_{\substack{i < p \\ j < q}} a_{i,j}, \]
where the empty sum has value 0.
\end{definition}

\begin{example}
If $A = \left(\begin{smallmatrix}
	2 & -1\\
	1 & \phantom{-}3
	\end{smallmatrix}\right) \in \mathbb{Z}^{2\times 2}$, then
	\[ A^+ = \begin{pmatrix}
	0 & 0 & 0\\
	0 & 2 & 1\\
	0 & 3 & 5
	\end{pmatrix}.\tag*{$\lozenge$}\]
\end{example}

\begin{lemma}\label{thm:sigmam-of-plus}
If $m,n \geq 2$ and $A \in G^{m\times n}$, then $\sigmam(A^+) = A$.
\end{lemma}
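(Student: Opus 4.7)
The plan is to compute $\sigmam(A^+)_{p,q}$ directly from the definitions of $A^+$ and $\sigmam$, and to telescope the resulting sums row-wise (or column-wise) to recover $a_{p,q}$. Conceptually, $A^+$ is a discrete analogue of the 2D integral $\int_0^x\!\!\int_0^y a$, and $\sigmam$ is a discrete analogue of the mixed partial derivative $\partial^2/\partial x\partial y$, so the identity $\sigmam(A^+) = A$ is simply a discrete fundamental theorem of calculus. The proof should mirror that intuition.

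Concretely, I would start by expanding
\[ \sigmam(A^+)_{p,q} = a^+_{p,q} - a^+_{p+1,q} - a^+_{p,q+1} + a^+_{p+1,q+1} \]
and then regroup as $(a^+_{p+1,q+1} - a^+_{p,q+1}) - (a^+_{p+1,q} - a^+_{p,q})$. The first difference is a ``partial derivative in the row direction'': by definition,
\[ a^+_{p+1,q+1} - a^+_{p,q+1} = \sum_{\substack{i < p+1 \\ j < q+1}} a_{i,j} \;-\; \sum_{\substack{i < p \\ j < q+1}} a_{i,j} \;=\; \sum_{j \leq q} a_{p,j}, \]
and analogously $a^+_{p+1,q} - a^+_{p,q} = \sum_{j \leq q-1} a_{p,j}$. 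Subtracting these two row-sums telescopes to the single entry $a_{p,q}$, which is exactly what we want.

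The only thing to be mildly careful about is the boundary case $q = 1$, where the second sum is empty; the convention that the empty sum equals $0$, stated in the definition of $A^+$, handles this uniformly. Since $p$ and $q$ were arbitrary in $[m]\times[n]$, this proves $\sigmam(A^+) = A$ entrywise.

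I do not expect any real obstacle: the proof is a short, routine calculation, and the hardest decision is aesthetic (whether to telescope by rows or by columns). An alternative route would be to verify $\sigmam(A^+) = A$ on the standard basis matrices $E_{i,j}$ and invoke linearity of $\sigmam$, but since the direct telescoping is essentially a one-line computation there is no advantage to this detour.
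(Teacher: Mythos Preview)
Your proposal is correct and follows essentially the same approach as the paper: a direct entrywise computation that telescopes the double sums defining $A^+$ first in the row index and then in the column index to isolate $a_{p,q}$. The only cosmetic difference is the order in which you group the four terms, which is immaterial.
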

\begin{proof}
The proof is straightforward calculation:
\begin{align*}
\sigmam(A^+)_{p,q}
    &= \lowerparen{0.5em}{\sum_{\substack{i < p\\j < q}} a_{i,j}- \sum_{\substack{i < p+1\\j < q}} a_{i,j}} - \lowerparen{0.5em}{\sum_{\substack{i < p\\j < q+1}} a_{i,j} - \sum_{\substack{i < p+1\\j < q+1}}a_{i,j}}\\
    &= \sum_{j < q} a_{p,j} - \sum_{j < q+1} a_{p,j}\\
    &= a_{p,q}. \qedhere
\end{align*}
\end{proof}

With this, the proof of the first theorem is swift.

\begin{proof}[Proof of \cref{thm:sigmam-iso}]
The map $\sigmam\colon G^{m\times n} \to G^{(m-1) \times (n-1)}$ is linear and therefore a homomorphism. \cref{thm:sigmam-of-plus} shows that $\sigmam$ is surjective. The kernel of $\sigmam$ is $\mathcal{K}_{m,n}$ by \cref{thm:kernel-sigmam}, so applying the Fundamental Homomorphism Theorem finishes the proof.
\end{proof}

Thus, each equivalence class in $G^{m\times n}$ is the preimage of exactly one element of $G^{(m-1) \times (n-1)}$; namely, $[A^+]$ is the preimage of $A$. We can use \cref{thm:sigmam-iso} to count the number and size of equivalence classes of $G^{m\times n}$ for finite groups $G$.

\begin{corollary}\label{thm:counting equivalence classes}
Let $G$ be a finite abelian group of order $k$. There are exactly $k^{(m-1)(n-1)}$ equivalence classes in $G^{m\times n}$, each of size $k^{m+n-1}$.
\end{corollary}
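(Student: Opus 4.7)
The plan is to read this corollary as a direct bookkeeping consequence of \cref{thm:sigmam-iso}. Since $G^{(m-1)\times(n-1)}$ is a group of order $k^{(m-1)(n-1)}$ and the equivalence classes of $\sim$ on $G^{m\times n}$ are precisely the cosets of $\mathcal K_{m,n}$, the isomorphism $G^{m\times n}/\mathcal K_{m,n}\cong G^{(m-1)\times(n-1)}$ immediately gives that the number of equivalence classes is $k^{(m-1)(n-1)}$.

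For the common size of the classes, I would invoke the standard fact that all cosets of a subgroup have the same cardinality, and then divide:
\[
|[A]| \;=\; \frac{|G^{m\times n}|}{[G^{m\times n}:\mathcal K_{m,n}]} \;=\; \frac{k^{mn}}{k^{(m-1)(n-1)}} \;=\; k^{m+n-1}.
\]
This already finishes the proof, but for insurance I would also verify the size directly from the definition of $\mathcal K_{m,n}$: the parametrization $(u,v)\mapsto ue_n^T+e_m v^T$ is a surjection $G^m\times G^n\twoheadrightarrow\mathcal K_{m,n}$ whose kernel is exactly the ``diagonal shift'' subgroup $\{(c\mathbf 1,-c\mathbf 1):c\in G\}$ of order $k$, so $|\mathcal K_{m,n}|=k^{m+n}/k=k^{m+n-1}$, matching the count above.

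There is no real obstacle here; the only thing to be careful about is writing the exponent arithmetic $mn-(m-1)(n-1)=m+n-1$ correctly and noting that the two counts multiply to $|G^{m\times n}|=k^{mn}$, which is a useful sanity check.
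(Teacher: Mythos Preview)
Your proposal is correct and matches the paper's approach: the corollary is stated without proof, as an immediate consequence of \cref{thm:sigmam-iso}, and your argument via coset counting is exactly the intended reading. The extra verification of $|\mathcal K_{m,n}|$ via the parametrization is a nice sanity check (essentially \cref{thm:equal-kernel-matrices}), though not needed for the proof.
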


In particular, for every $B \in G^{(m-1)\times(n-1)}$, there are exactly $k^{m+n-1}$ matrices $A \in G^{m\times n}$ such that $\sigmam(A) = B$.

\section{Completion of partial matrices}\label{sec:bipartite}

This section consists mainly of a proof of \cref{thm:bipart-recovery}. To recall the setup: $A$ is an $m\times n$ partial matrix, $B \in G^{(m-1)\times (n-1)}$, and $\widetilde{B}$ is a completion of $A$ such that $\sigmam(\widetilde{B}) = B$. Also, we call the partial matrix $A$ \textit{cycle-balanced} if $\sum_{r=1}^k (a_{i_r,j_r} - a_{i_{r+1},j_r}) = 0$ for every cycle $(x_{i_1},y_{i_1},\dots,x_k,y_k,x_{k+1}=x_1)$ in the bipartite graph $H_A$.

We will need the following two lemmas.

\begin{lemma}\label{thm:minimal-cycles-balanced}
A partial matrix $A$ is cycle-balanced if and only if every minimal cycle in $A$ is balanced.
\end{lemma}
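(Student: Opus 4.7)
The reverse implication is immediate, since a minimal cycle is itself a cycle. For the forward implication, the plan is to induct on the edge-count of cycles in $A$: a cycle whose edge-count is minimum among cycles in its induced subgraph is automatically minimal, hence balanced by hypothesis, which gives the base case. For the inductive step, I take a non-minimal cycle $c$ and split it into two strictly shorter cycles $c_1, c_2$ in $A$ satisfying
\[\operatorname{balance}(c) = \operatorname{balance}(c_1) + \operatorname{balance}(c_2).\]
The induction hypothesis then forces $\operatorname{balance}(c) = 0$.

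The splitting divides into two cases. If the closed trail underlying $c$ visits some vertex twice, I split $c$ at that vertex into two shorter closed trails, each a cycle in $A$. Otherwise $c$ is a simple cycle, and the non-minimality hypothesis supplies a cycle $c'$ on strictly fewer vertices in the subgraph of $H_A$ induced by the vertices of $c$. Since the edges of a simple cycle form exactly one cycle subgraph (namely the cycle itself), $c'$ must use some edge of $H_A$ not in $c$ — that is, $c$ has a chord $e$. Concatenating $e$ with each of the two arcs of $c$ bounded by its endpoints yields two shorter simple cycles $c_1, c_2$.

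Verifying the balance identity above is then bookkeeping. Each non-chord edge of $c$ appears in exactly one of $c_1, c_2$ with the same traversal direction, and therefore with the same sign in the balance sum. When a chord $e$ is used, the bipartiteness of $H_A$ forces $e$ to be traversed in opposite directions in $c_1$ and $c_2$, so its signed contributions cancel. The main obstacle is the case split — specifically, isolating the small graph-theoretic fact that non-minimality of a simple cycle produces a chord, and dispatching the repeated-vertex case separately since the paper's definition of ``cycle'' allows closed trails rather than only simple cycles. Everything else is routine.
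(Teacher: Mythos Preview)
Your argument is correct and follows the same inductive strategy as the paper's: split a non-minimal cycle into two strictly shorter ones whose balances sum to that of the original, then apply the inductive hypothesis. Your explicit case split (repeated vertex versus simple-with-chord) is handled uniformly in the paper by locating any edge $(x_{i_s},y_{j_t})$ with $t\notin\{s-1,s\}$; note also that you have the labels ``forward'' and ``reverse'' interchanged.
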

\begin{proof}
If $A$ is cycle-balanced, then every cycle in $A$ is balanced, so in particular the minimal cycles are. Now suppose that every minimal cycle is balanced. We prove that every cycle is balanced by induction. Any 4-cycle is minimal and therefore balanced; now let $2k \geq 6$ and assume that every cycle of length less than $2k$ is balanced. Choose any cycle $\gamma = (i_1,j_1,\dots, i_k,j_k,i_{k+1} = i_1)$ of length $2k$. If $\gamma$ is minimal, then it is balanced. Otherwise there is an edge $(x_{i_s},y_{j_t})$ in $H_A$ with $t\notin \{s-1,s\}$ (mod $k$). We can choose the starting vertex of $\gamma$ so that $s = 1$. Then $(i_1,j_1,\dots,i_t,j_t,i_1)$ and $(i_1,j_t,i_{t+1},\dots,i_k,j_k,i_1)$ are two cycles of length strictly less than $2k$, so both are balanced. We can decompose the sum $\sum_{r=1}^k (a_{i_r,j_r} - a_{i_{r+1}, j_r})$ over these smaller cycles:
\[ \left( \sum_{r=1}^{t-1} (a_{i_r,j_r} - a_{i_{r+1}, j_r})
        + (a_{i_t,j_t} - a_{i_1,j_t}) \right)
    + \left( (a_{i_1,j_t} - a_{i_t,j_t})
        + \sum_{r=t}^{k} (a_{i_r,j_r} - a_{i_{r+1}, j_r}) \right). \]
Since the smaller cycles are balanced, both sums are 0, which shows that $\gamma$ is balanced.
\end{proof}

\begin{lemma}\label{thm:equal-kernel-matrices}
Let $u, \tilde{u} \in G^{m\times 1}$ and $v,\tilde{v} \in G^{n\times 1}$. Then $ue_n^T + e_mv^T =  \tilde{u}e_n^T + e_m\tilde{v}^T$ if and only if $u = \tilde{u} + g e_m$ and $v = \tilde{v} - g e_n$ for some $g \in G$.
\end{lemma}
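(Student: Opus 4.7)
The statement asserts a uniqueness-up-to-a-constant property for the decomposition of a matrix in $\mathcal{K}_{m,n}$ as a rank-one sum. I plan to prove both directions by unpacking the matrix identity entry by entry.

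For the easy ($\Leftarrow$) direction, I would substitute $u = \tilde{u} + ge_m$ and $v = \tilde{v} - ge_n$ into $ue_n^T + e_mv^T$ and expand. Since $(ge_m)e_n^T = ge_me_n^T = e_m(ge_n)^T$, the two extra terms cancel, leaving $\tilde{u}e_n^T + e_m\tilde{v}^T$.

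For the forward ($\Rightarrow$) direction, I would compare the $(i,j)$ entries of the two sides. The equation $ue_n^T + e_mv^T = \tilde{u}e_n^T + e_m\tilde{v}^T$ is equivalent to
\[ u_i + v_j = \tilde{u}_i + \tilde{v}_j \quad \text{for every } (i,j) \in [m]\times[n], \]
which rearranges to $u_i - \tilde{u}_i = \tilde{v}_j - v_j$. Since the left side depends only on $i$ and the right only on $j$, both must equal a common constant $g \in G$. Reading this back yields $u = \tilde{u} + ge_m$ and $v = \tilde{v} - ge_n$, as required.

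There is no real obstacle here; the only subtlety is that we are working over an additive abelian group, not a ring, so I should be slightly careful to interpret expressions like $ge_m$ as the vector whose entries are $g$ (using the convention $1\cdot g = g$ already set up in the introduction) rather than as scalar multiplication. The argument uses only additive group structure, so it goes through verbatim.
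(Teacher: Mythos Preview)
Your proof is correct and follows essentially the same approach as the paper: both unpack the matrix equation entrywise to $u_i + v_j = \tilde u_i + \tilde v_j$ and deduce that $u_i - \tilde u_i$ is a constant $g$ independent of $i$ (the paper pins down $g$ by setting $i=1$ and then back-substituting, while you use the equivalent ``depends only on $i$ versus only on $j$'' observation).
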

\begin{proof}
It is straightforward to check that the substitution works for all $g \in G$. For the other direction, we have
\begin{equation}\label{eqn:sum-rank-1-lem}
    u_i + v_j = \tilde{u}_i + \tilde{v}_j
\end{equation}
for all $(i,j) \in [m]\times[n]$. Writing $u_1 = \tilde{u}_1 + g$ for some $g \in G$ and evaluating \eqref{eqn:sum-rank-1-lem} with $i = 1$ gives $v_j= \tilde{v}_j - g$ for all $j \in [n]$; inserting this into \eqref{eqn:sum-rank-1-lem} shows that $u_i = \tilde{u}_i + g$ for all $i \in [m]$.
\end{proof}

\begin{proof}[Proof of \cref{thm:bipart-recovery}]
The theorem consists of solving the equation $\sigmam(X) = B$, where $X$ is a completion of $A$. Since this is equivalent to solving $\sigmam(X - \widetilde{B}) = 0$, we may assume that $B = 0$ by replacing $A$ with the partial matrix $A - \widetilde{B}$. With this substitution, we need only prove the theorem with an arbitrary partial matrix $A$ and $B = \widetilde{B} = 0$.

We first prove that $A$ is consistent with $0$ if and only if $A$ is cycle balanced. To that end, assume that $A$ is consistent with $0$ and let $C$ be a completion of $A$ so that $\sigmam(C) = 0$. Every cycle in $C$ is also a  cycle in $A$, so it suffices to show that $C$ is cycle-balanced. Since $H_C$ is the complete bipartite graph, the minimal cycles in $H_C$ are the 4-cycles. For any $1 \leq i_1 < i_2 \leq m$ and $1 \leq j_1 < j_2 \leq n$, we have
\[ c_{i_1,j_1} - c_{i_2,j_1} + c_{i_2,j_2} - c_{i_1,j_2}
    = \sum_{r=i_1}^{i_2-1} \sum_{k=j_1}^{j_2-1} \sigmam(C)_{r,k}
    =0, \]
since the sum is telescoping. By \cref{thm:minimal-cycles-balanced}, $C$ is cycle-balanced.

To prove the converse, assume that $A$ is cycle-balanced and that $H_A$ has bipartitions $X = \{x_i : i \in [m]\}$ and $Y = \{y_i : i \in [n]\}$; we want to construct a completion $C$ of $A$ that satisfies $\sigmam(C) = 0$. (\cref{ex:completion-construction}, which follows the proof, walks through a particular instance of the following construction.) Let $K_{X,Y}$ be the complete graph with bipartition $[X,Y]$. If $H_A$ is connected, set $A' = A$. Otherwise, let $S \subseteq E(K_{X,Y}) \setminus E(H_A)$ be a subset of edges such that the graph with edge set $E(H_A) \cup S$ is connected and contains no cycles not in $E(H_A)$; we define a new partial matrix $A'$ by setting
\[
a'_{i,j} = \begin{cases}
    a_{i,j} &\text{if } a_{i,j} \neq \ast \\
    0 &\text{if } (x_i,y_j) \in S \\
    \ast &\text{otherwise.}
\end{cases}
\]
The matrix $A'$ is cycle-balanced, since any cycle in $A'$ is a cycle in $A$. For any pair $(s,t)\in [m]\times [n]$, let $(x_s = x_{i_1}, y_{j_1},\dots,x_{i_k},y_{j_k} = y_t)$ be a path in $H_{A'}$. We define the matrix $C$ by
\begin{equation}\label{eq:C-construction}
    c_{s,t} = \sum_{r=1}^k a'_{i_r,j_r} - \sum_{r=1}^{k-1} a'_{i_{r+1},j_r}.
\end{equation}
This sum is independent of the specific path from $i$ to $j$ because $A$ is cycle-balanced. Moreover, $C$ is a cycle-balanced completion of $A$. Taking the cycle $(i,j,i+1,j+1,i)$ in $C$ shows that $\sigmam(C)_{i,j} = 0$.

In the remainder of the proof, we assume that $A$ is consistent with $B$ and that $C$ is a completion of $A$ with $\sigmam(C) = B$. We now prove that $C$ is the unique completion of $A$ satisfying $\sigmam(C) = B$ if and only if $H_A$ is connected. Therefore, let $C'$ be a completion of $A$ such that $\sigmam(C') = B$. By \cref{thm:kernel-sigmam}, there are two vectors $u \in G^{m\times 1}$ and $v \in G^{n\times 1}$ such that $C' - C= ue_n^T + e_mv^T$. Setting $D = C' - C$ gives $d_{i,j} = u_i + v_j$ for all $(i,j) \in [m]\times [n]$. Let $I = \{(i,j) : (x_i,y_j) \in E(H_A)\}$. Since $C'$ and $C$ are both completions of $A$, we have $d_{i,j} = 0$ for each $(i,j) \in I$, which implies that $u_i = -v_j$ for each $(i,j) \in I$. If $x_s$ and $y_t$ are in the same component of $H_A$, there is a path $(x_s = x_{i_1}, y_{j_1}, \dots,x_{j_\ell}, y_{j_\ell} = y_t)$. Then $u_{i_r} = -v_{j_r} = u_{i_{r+1}}$ for each $r$, so $u_s = -v_t$ by induction.

If $H_A$ has only one component, then $u_i = -v_j$ for every $(i,j) \in [m]\times [n]$, so $D = 0$. Therefore $C = C'$, which shows that $A$ has a unique completion in the preimage of $B$. If $H_A$ has at least two components, we may arbitrarily assign an element of $G$ to each component of $H_A$; this uniquely determines the entries of $u$ and $v$. It is straightforward to check that for any such assignment, $C'$ is indeed a completion of $A$ that satisfies $\sigmam(C') = B$. By \cref{thm:equal-kernel-matrices}, changing the value in exactly one component changes the matrix $D$. Therefore $A$ has multiple completions in the preimage of $B$.

If $\lvert G \rvert = k$, then there are $k^{c(H)}$ possible values for $(u,v)$. \cref{thm:equal-kernel-matrices} shows that a given matrix $ue_n^T + e_mv^T$ is produced by exactly $k$ pairs, so there are $k^{c(H)-1}$ completions of $A$ whose balanced collapsing sum is $B$.
\end{proof}

\begin{example}\label{ex:completion-construction}
    Suppose $G = \mathbb{Z}_5$ and set\vspace{-2ex}
    \[ A = 
        \begin{pmatrix}
        0 & * & 1 & * \\
        * & * & * & 2 \\
        1 & * & 2 & *
        \end{pmatrix}.
    \]
    We use the process described in the proof of \cref{thm:bipart-recovery} to construct a completion $C$ of $A$ such that $\sigmam(C) = 0$. The graph $H_A$ is
    \begin{center}\begin{tikzpicture}
    \foreach \label/\x in {3/1, 2/0, 1/-1}
        \node (a\label) at (\x,1) {$x_\label$};
    \foreach \label/\x in {4/1.5, 3/0.5, 2/-0.5, 1/-1.5}
        \node (b\label) at (\x,0) {$y_\label$};
    \foreach \from/\to in {a1/b1, a1/b3, a2/b4, a3/b1, a3/b3}
        \draw (\from) to (\to);
    \end{tikzpicture}\end{center}
    We note that $A$ is cycle-balanced. We then choose a set of edges to add that (1) results in a connected graph and (2) does not create any new cycles. The set $S = \{(x_2,y_2), (x_2,y_3)\}$ works. Placing zeros in the corresponding entries of $A$ yields the matrix
    \[  A' = 
        \begin{pmatrix}
        0 & * & 1 & * \\
        * & 0 & 0 & 2 \\
        1 & * & 2 & *
        \end{pmatrix}.
    \]
    We then fill in the remaining entries using formula \eqref{eq:C-construction}. For example, $(x_1,y_3,x_2,y_2)$ is a path from $x_1$ to $y_2$, so formula \eqref{eq:C-construction} defines $c_{1,2} = (a_{1,3} + a_{2,2}) - a_{2,3} = 1$. The other entries can be computed in a similar manner to obtain
    \[ C = 
        \begin{pmatrix}
        0 & 1 & 1 & 3 \\
        4 & 0 & 0 & 2 \\
        1 & 2 & 2 & 4
        \end{pmatrix},
    \]
    and it is a simple matter to confirm that $\sigmam(C) = 0$.\hfill$\lozenge$
\end{example}

Although the notion of connectedness featuring in \cref{thm:bipart-recovery} is natural from a graph-theoretic point of view, it can seem unexpectedly finicky from the perspective of matrices, so we conclude with a few comments. It is possible for $H_A$ to be connected when only $m+n-1$ elements of an $m\times n$ partial matrix $A$ are specified (for example, those in the first row and column of $A$); on the other hand, $H_A$ can be disconnected even when $mn-\min\{m,n\}$ elements of $A$ are specified (the first $m-1$ rows or the first $n-1$ columns). Moreover, connectedness is not a stable condition: For each integer $m+n-1 \leq k \leq mn-\min\{m,n\}+1$, there is an $m\times n$ partial matrix $A$ with $k$ specified entries such that $H_A$ is connected, but deleting a single specified entry of $A$ disconnects $H_A$. There do, however, exist partial matrices that are quite stable: If the first $k$ rows and columns of $A$ are specified, then $H_A$ remains connected after converting any $k-1$ known entries to blank ones.\newline

\vspace{1ex}

\noindent
{\large \textbf{Acknowledgments}}\\[0.3em]
The author thanks Samuel Gutekunst for several delightful conversations over the course of this research and the anonymous referees for their helpful comments.

\newpage


\bibliography{bibliography}
\bibliographystyle{amsplain-nodash}

\end{document}